\newcommand{\limplus}{{\mathchoice{\vcenter{\hbox{$\scriptstyle +$}}}
  {\vcenter{\hbox{$\scriptstyle +$}}}
  {\vcenter{\hbox{$\scriptscriptstyle +$}}}
  {\vcenter{\hbox{$\scriptscriptstyle +$}}}
}}
\newcommand{\grad}{\ensuremath{\nabla}}
\newcommand{\R}{\ensuremath{\mathbb{R}}}
\DeclareMathOperator{\dist}{\textnormal{dist}}
\renewcommand{\S}{\ensuremath{\mathbb{S}}}
\newcommand{\Heis}{\ensuremath{\mathbb{H}}}
\newcommand{\Pip}{\ensuremath{{\Pi_{\nu, d}^\limplus}}}
\newcommand{\gradH}{\ensuremath{{\grad_{\Heis^n}}}}
\newcommand{\HorizSubspace}[1]{\ensuremath{\mathcal{H}_{#1}}}
\newcommand{\normal}{\ensuremath{n}}
\newcommand{\sgn}{\ensuremath{\textnormal{sgn}}}
\renewcommand{\div}{\textnormal{div}}
\newcommand{\ptS}{\hspace{-1pt} }
\newtheorem{theorem}{Theorem}[section]
\newtheorem{corollary}[theorem]{Corollary}
\numberwithin{theorem}{section}
\numberwithin{definition}{section}
\newcommand{\thmref}[1]{Theorem~\ref{#1}}
\begin{document}

\pagenumbering{arabic}

\title[Geometric Hardy inequalities on convex domains in $\Heis^n$]{Geometric Hardy inequalities for the sub-elliptic Laplacian on convex domains in the Heisenberg group}

\author{Simon Larson}
\address{Department of Mathematics, Royal Institute of Technology, SE-10044 Stockholm, Sweden}

\email{simla@math.kth.se}

\subjclass[2010]{35A23, 35H20}

\date{\today.}

\begin{abstract}
We prove geometric $L^p$ versions of Hardy's inequality for the sub-elliptic Laplacian on convex domains $\Omega$ in the Heisenberg group $\mathbb{H}^n$, where convex is meant in the Euclidean sense. When $p=2$ and $\Omega$ is the half-space given by $\langle \xi, \nu\rangle > d$ this generalizes an inequality previously obtained by Luan and Yang. For such $p$ and $\Omega$ the inequality is sharp and takes the form
\begin{equation}
	 \int_\Omega |\nabla_{\mathbb{H}^n}u|^2 \, d\xi \geq \frac{1}{4}\int_{\Omega} \sum_{i=1}^n\frac{\langle X_i(\xi), \nu\rangle^2+\langle Y_i(\xi), \nu\rangle^2}{\dist(\xi, \partial \Omega)^2}|u|^2\, d\xi, 
\end{equation}
where $\dist(\, \cdot\,, \partial \Omega)$ denotes the Euclidean distance from $\partial \Omega$.
\end{abstract}

\maketitle


\section{Introduction}
In~\cite{LuanYang} Luan and Yang prove the Hardy inequality
\begin{equation}\label{eq:LuanYangHardy}
	\int_{\Heis^n_\limplus}|\grad_{\Heis^n} u|^2 \, d\xi \geq \int_{\Heis_\limplus^n}\frac{|x|^2+|y|^2}{t^2}|u|^2\, d\xi,
\end{equation}
where an element $\xi\in \Heis^n$ is written as $\xi=(x, y, t)$, with $x, y\in \R^n$ and $t\in \R$, and $\Heis^n_\limplus:=\{\xi\in\Heis^n : t>0\}$. In this paper we provide a different proof of this inequality, generalize it to any half-space of $\Heis^n$ and use it to obtain a weighted geometric Hardy inequality on a convex domain $\Omega$, where convex is meant in the Euclidean sense. The weight that appears in our results is in some sense a natural sub-elliptic weighting of the Euclidean distance and is closely related to distances studied in~\cite{PrandiRizziSeri, RuszkowskiWeidl}.

We begin with a short introduction providing the basic definitions, notation and background necessary for the sequel.


The $n$-dimensional \emph{Heisenberg group}, which we denote by $\Heis^n$, may be described as the set $\R^{2n+1}$ equipped with the group law
\begin{equation}
	\hat\xi \circ \tilde\xi := (\hat x+\tilde x, \hat y+ \tilde y, \hat t+\tilde t + 2 \sum_{i=1}^n (\tilde x_i \hat y_i - \hat x_i \tilde y_i)), 
\end{equation}
where we use the notation $\xi = (x_1, \dots, x_n, y_1, \dots, y_n, t)=(x, y, t)\in \R^{2n+1}$. The inverse element of $\xi$, with respect to the group law, is denoted by $\xi^{-1}$ and we note that $\xi^{-1}=-\xi$. The group law induces the following dilation operation
\begin{equation}
	\delta_\lambda(\xi) := (\lambda x, \lambda y, \lambda^2 t)\quad \textnormal{for } \lambda>0.
\end{equation}

The Lie algebra of left-invariant vector fields on $\Heis^n$ is spanned by 
\begin{align}
	X_i := \frac{\partial }{\partial x_i}+2 y_i \frac{\partial }{\partial t} \quad \textrm{and} \quad Y_i := \frac{\partial }{\partial y_i}-2 x_i \frac{\partial }{\partial t}, 
\end{align}
for $1\leq i\leq n$, together with their commutators. The only non-zero commutators are 
\[
	[X_i, Y_i]=-4 \frac{\partial }{\partial t}.
\]

We also define the associated gradient $\gradH\!:= (X_1, \dots, X_n, Y_1, \dots, Y_n)$ and the \emph{Heisenberg Laplacian} $\Delta_{\Heis^n}$ on $\Heis^n$, formally given by $\Delta_{\Heis^n}\!:= \sum_{i=1}^n X_i^2+Y_i^2$. The collection of vector fields $\{X_i, Y_i : 1\leq i\leq n\}$ satisfies the H\"ormander finite rank condition:
\begin{equation}
	\textrm{Rank } \textrm{Lie}[X_1, \dots, X_n, Y_1, \dots, Y_n]=2n+1.
\end{equation}
Thus the Heisenberg Laplacian is a second order hypoelliptic differential operator~\cite{Hormander}.

We call a Lipschitz curve $\gamma\colon I\subset \R \to \Heis^n$ \emph{horizontal} if its tangent at almost every $\tau \in I$ is spanned by the $X_i$ and $Y_i$, that is, for a.e.\ $\tau\in I$ there exist $a, b \in \R^n$ such that
\begin{equation}
	\gamma'(\tau)=\sum_{i=1}^n a_iX_i(\gamma(\tau))+b_iY_i(\gamma(\tau)).
\end{equation}
We denote the set of all horizontal curves $\gamma\colon I \to \Heis^n$ by $\mathcal{S}_{\Heis^n}(I)$ and for a given $\gamma\in \mathcal{S}_{\Heis^n}(I)$ we define its length $l(\gamma)$ as
\begin{equation}
	l(\gamma):=\int_I \bigl(|a(\tau)|^2+|b(\tau)|^2\bigr)^{1/2}\, d\tau.
\end{equation}
By the accessibility theorem of Chow and Rashevsky any pair of points $\xi_0, \xi_1 \in \Omega$ where $\Omega$ is an open connected subset of $\Heis^n$ can be joined by a horizontal curve $\gamma\colon [0, 1]\to \Heis^n$ of finite length (see~\cite{MR0001880, Rashevsky}).

The \emph{Carnot--Carath\'eodory distance} $\delta_{cc}$ on $\Heis^{n}$ is defined as
\begin{equation}\label{eq:DefCCdistance}
	\delta_{cc}(\xi_0, \xi_1) := \inf \{l(\gamma) : \gamma\in \mathcal{S}_{\Heis^n}([0, 1]), \gamma(0)=\xi_0, \gamma(1)=\xi_1\}.
\end{equation}

The Carnot--Carath\'eodory distance is not the only distance that has a natural connection to $\Heis^n$. A second distance that arises naturally when considering the fundamental solution of $\Delta_{\Heis^n}$ is the \emph{Kaplan distance} (see~\cite{MR1070830}):
\begin{equation}
	\delta_{K}(\xi_0, \xi_1):= \rho(\xi_1^{-1}\!\circ \xi_0), 
\end{equation}
where $\rho$ is the \emph{Kaplan gauge} on $\Heis^n$ defined by
\begin{equation}
	\rho(\xi):=\bigl((|x|^2+|y|^2)^2+ 4 t^2\bigr)^{1/4}.
\end{equation}
It turns out that the two distance functions above are bi-Lipschitz equivalent, that is, there exists a constant $C>0$ such that for all $\xi^0, \xi^1\in \Heis^n$ we have that
\begin{equation}
	C^{-1} \delta_K(\xi_0, \xi_1)\leq \delta_{cc}(\xi_0, \xi_1) \leq C \delta_K (\xi_0, \xi_1).
\end{equation}

Let $\mathcal{M}\subset \Heis^n$ be a $2n$-dimensional $C^1$ manifold. We call a point $\xi_0 \in \mathcal{M}$ a \emph{characteristic point} of $\mathcal{M}$ if the tangent space $T_{\xi_0}\mathcal{M}$ is spanned by $\{ X_i(\xi_0),, Y_i(\xi_0) : 1\leq i \leq n\}$.

Even though both $\delta_{cc}$ and $\delta_{K}$ appear naturally when considering the geometric structure of $\Heis^n$ these distances can be rather difficult to work with, see for instance the work of Arcozzi and Ferrari \cite{ArcozziFerrari1, ArcozziFerrari2}. Extra difficulties arise when studying the behaviour of the distance to a hypersurface $\mathcal{M}$ close to one of its characteristic points.

In what follows we will be interested in inequalities of the form
\begin{equation}\label{eq:GeneralHardyDistance}
	\int_\Omega |\gradH u(\xi)|^p \, d\xi \geq C \int_\Omega \frac{|u(\xi)|^p}{\rho(\xi, \partial \Omega)^p} \, d\xi, 
\end{equation}
where $\Omega \subset \Heis^n$, $p\geq 2$ and $\rho$ is some, possibly weighted, distance from $\xi$ to the boundary of $\Omega$. 
In the Euclidean setting, with $\gradH$ replaced by the usual gradient and $\rho$ by the Euclidean distance, such inequalities have a long history and wide range of applications (see, for instance,~\cite{MR990239, MR2777530, BalEvLew}).

In the setting of the Heisenberg group results of this kind have been obtained through methods based on sub-elliptic capacity and Fefferman--Phong inequalities. In~\cite{DanielliGarofaloPhuc} the authors provide sharp conditions on the triple $\Omega, \rho$ and $C$ for the validity of~\eqref{eq:GeneralHardyDistance}. However, the results obtained in~\cite{DanielliGarofaloPhuc} are of a rather non-explicit nature and what they say in a specific setting is not very approachable.  

One of the obstacles in proving inequalities of the form~\eqref{eq:GeneralHardyDistance} on domains in $\Heis^n$ is that the natural distances on $\Heis^n$ ($\delta_{cc}, \delta_K$) are rather difficult to work with. If $\Omega=\Heis^n\setminus\{\xi_0\}$ and $\rho$ is the Carnot--Carath\'eodory or Kaplan distance to the point $\xi_0\in\Heis^n$ inequalities of this form have been studied in a several articles (see, for instance,~\cite{Ambrosio, MR1070830, MR2447488, RuzhanskySuragan}). However, when $\partial \Omega$ is a more complicated set the problem becomes more difficult. Results concerning the behaviour of the distance from sets and a detailed analysis of the problems arising can be found in work by Arcozzi and Ferrari~\cite{ArcozziFerrari1, ArcozziFerrari2}. 

In this article we begin by generalizing~\eqref{eq:LuanYangHardy} to the case where $\Omega$ is an arbitrary half-space of~$\Heis^n$. The proof given here differs from that given in~\cite{LuanYang} and contains their result as a special case. Moreover, from our proof of~\eqref{eq:LuanYangHardy} and the corresponding generalizations we are able to apply a standard argument and find $L^p$ versions of the inequalities.

In Section~\ref{sec:IneqConvexSet}, we combine our inequalities for half-spaces with a method used by Avkhadiev in the Euclidean setting~\cite{Avkhadiev1} to obtain an inequality of the form~\eqref{eq:GeneralHardyDistance} for convex domains in $\Heis^n$, here convex is meant in the Euclidean sense, and with $\rho$ being a weighted Euclidean distance. More specifically we have that
\begin{equation}
 	\frac{1}{\rho(\xi)^p}=\sum_{i=1}^n\frac{|\langle X_i(\xi), \nu(\xi) \rangle|^p + |\langle Y_i(\xi), \nu(\xi) \rangle|^p}{\dist(\xi, \partial \Omega)^p}, 
\end{equation}
where $p\geq 2$, $\dist(\, \cdot\,, \partial\Omega)$ denotes the Euclidean distance to the boundary of $\Omega$ and $\nu(\xi)\in \S^{2n}$ is such that $\xi + \dist(\xi, \partial \Omega)\nu(\xi) \in \partial \Omega$.


\section{Hardy inequalities on half-spaces of \texorpdfstring{$\Heis^n$}{Hn}}

For $\nu\in \S^{2n}$ and $d\in \R$ let $\Pi_{\nu, d}$ be the hyperplane in $\Heis^n$ defined by the equation $\langle \xi, \nu\rangle =d$. Correspondingly, let $\Pip$ be the half-space of $\Heis^n$ where $\langle \xi, \nu\rangle>d$.
\begin{theorem}\label{thm:IneqHalfspace}
 	Let $u\in C_0^\infty(\Pip)$. Then the following inequality holds
 	\begin{equation}\label{eq:IneqHalfspace}
 		\int_{\Pip}|\grad_{\Heis^n}u|^2\, d\xi \geq \frac{1}{4}\int_{\Pip}\sum_{i=1}^n\frac{\langle X_i(\xi), \nu\rangle^2+\langle Y_i(\xi), \nu \rangle^2}{\dist(\xi, \partial\Pip)^2}|u|^2\, d\xi.
 	\end{equation}
\end{theorem}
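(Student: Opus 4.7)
The half-space $\Pip$ has a Euclidean hyperplane as boundary, so the Euclidean distance to $\partial\Pip$ coincides on $\Pip$ with the globally affine function $f(\xi):=\langle \xi,\nu\rangle - d$. My plan is to mimic the classical proof of the half-space Hardy inequality on $\R^N$: I will write the prescribed weight as the negative sub-elliptic divergence of a vector field built from $f$, multiply by $|u|^2$ and integrate by parts, then close the estimate with Cauchy--Schwarz.

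The preparatory step is a pair of direct computations. Applying $X_i = \partial_{x_i} + 2y_i\partial_t$ and $Y_i = \partial_{y_i} - 2x_i\partial_t$ to the affine function $f$ gives
\begin{equation*}
	X_i f(\xi) = \langle X_i(\xi),\nu\rangle, \qquad Y_i f(\xi) = \langle Y_i(\xi),\nu\rangle,
\end{equation*}
so $|\gradH f|^2$ reproduces exactly the numerator on the right-hand side of~\eqref{eq:IneqHalfspace}. Differentiating once more, the remaining nonconstant terms $2y_i\nu_t$ and $-2x_i\nu_t$ are annihilated by $X_i$ and $Y_i$ respectively (each depends on a variable not touched by the corresponding vector field), so $X_i^2 f = Y_i^2 f = 0$ and therefore $\Delta_{\Heis^n} f \equiv 0$: every affine function is harmonic for the sub-elliptic Laplacian.

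Combining these two facts with the observation that each $X_i,Y_i$ is divergence-free with respect to Lebesgue measure, the quotient rule yields on $\Pip$ the pointwise identity
\begin{equation*}
	-\gradH\!\cdot\!\left(\frac{\gradH f}{f}\right) = \frac{|\gradH f|^2}{f^2} - \frac{\Delta_{\Heis^n} f}{f} = \frac{|\gradH f|^2}{f^2}.
\end{equation*}
I would multiply by $|u|^2$, integrate over $\Pip$, and integrate by parts. Since $u\in C_0^\infty(\Pip)$ is supported where $f$ is bounded away from zero, no boundary terms appear and one obtains
\begin{equation*}
	\int_{\Pip}\frac{|\gradH f|^2}{f^2}|u|^2\, d\xi = 2\int_{\Pip}\frac{u\,\gradH f\cdot\gradH u}{f}\, d\xi.
\end{equation*}
Cauchy--Schwarz on the right-hand side and cancellation of a common factor then deliver~\eqref{eq:IneqHalfspace} with the stated constant $1/4$.

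I do not foresee any serious obstacle: the whole argument hinges on the (mildly surprising) fact that every affine function on $\R^{2n+1}$ is $\Delta_{\Heis^n}$-harmonic, which instantly reduces the problem to the classical Hardy integration-by-parts scheme. As a consistency check, taking $\nu=(0,\dots,0,1)$ and $d=0$ gives $f=t$ and $|\gradH f|^2 = 4(|x|^2+|y|^2)$, so the inequality specialises to~\eqref{eq:LuanYangHardy}, confirming that the plan does generalise the inequality of Luan and Yang.
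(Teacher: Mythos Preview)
Your argument is correct and is essentially the same as the paper's: both choose the vector field $\gradH f/f$ with $f(\xi)=\langle\xi,\nu\rangle-d$, use the computation $X_i^2f=Y_i^2f=0$ (which you package as $\Delta_{\Heis^n}f=0$, while the paper writes it componentwise as $X_i(V_i)=-V_i^2$), and then integrate by parts. The only cosmetic difference is that the paper closes with the square $0\le\int|(\gradH+\alpha V)u|^2$ and optimises over $\alpha$, whereas you use Cauchy--Schwarz---these two endings are well known to be equivalent and yield the same constant $1/4$.
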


By choosing $\nu$ to be the unit vector in the $t$ direction and $d$ to be zero the above theorem reduces to~\eqref{eq:LuanYangHardy}. 

In the case of a half-space it was pointed out to us by Ruszkowski that outside a certain cone the weighted distance appearing in \thmref{thm:IneqHalfspace} is comparable to the Carnot--Carath\'eodory distance on the Heisenberg group. In fact, the weighted distance coincides with a reduced version of the Carnot--Carath\'eodory distance~\cite{RuszkowskiWeidl,PrandiRizziSeri}, namely 
\begin{equation}
w(\xi, \partial \Omega):= \inf\{\delta_{cc}(\xi, \hat\xi) : \hat\xi \in \partial\Omega \cap \textrm{Span}(X_i(\xi),  Y_i(\xi): 1\leq i\leq n)\}.
\end{equation}
For results concerning this reduced distance and Hardy inequalities closely related to those obtained here we refer to~\cite{PrandiRizziSeri} and an upcoming article by Ruszkowski and Weidl~\cite{RuszkowskiWeidl}.

We proceed by providing a factorization-type proof of the above theorem and also sketch how to obtain the same statement from~\eqref{eq:LuanYangHardy} through a simple translation argument. The second argument has the slight advantage that it gives a geometric interpretation of the weight appearing in the inequality, but most importantly it simplifies the proof that~\eqref{eq:IneqHalfspace} is sharp. However, later in the article we will need the calculations performed in our first proof.
 
\begin{proof}[Proof of~\thmref{thm:IneqHalfspace}]
	The inequality is obtained by a simple factorization argument and the optimization of a parameter $\alpha$. For $u\in C^\infty_0(\Pip)$ and any $V\in H^1(\Pip; \R^{2n})$ with components $(V_1, \dots, V_{2n})$ we have that
	\begin{align}
	 	0 & \leq \int_\Pip |(\gradH+\alpha V)u|^2\, d\xi \\[3pt]
	 	& = 
	 	\sum_{i=1}^n \int_\Pip \bigl(|(X_i+\alpha V_{i})u|^2+|(Y_i+\alpha V_{n\limplus i})u|^2\bigr) d\xi\\[3pt]
	 &=
	 	\sum_{i=1}^n \int_\Pip \bigl(|X_i u|^2+|Y_iu|^2 -\alpha |u|^2 (X_i(V_i) + Y_i(V_{n\limplus i}))
	 	+\alpha^2 |u|^2(V_i^2+V_{n\limplus i}^2)\bigr) d\xi,
	\end{align}
	where the last equality is obtained by partial integration and the fact that $u$ has compact support. 

	Rearranging the terms one finds the following inequality
	\begin{equation}\label{eq:VectorPotentialV}
	 	\int_\Pip |\gradH u|^2 \geq \sum_{i=1}^n \int_\Pip \alpha |u|^2 \bigl(X_i(V_i) + Y_i(V_{n\limplus i})-\alpha(V_i^2+V_{n\limplus i}^2)\bigr)\, d\xi.
	\end{equation}
	We now choose the components of $V$ as 
	\begin{align}
	 	V_i(\xi) &= \frac{\langle X_i(\xi), \nu\rangle}{\dist(\xi, \partial \Pip)} = \frac{\langle X_i(\xi), \nu\rangle}{\langle \xi, \nu\rangle -d}\\
	 	V_{n\limplus i}(\xi) &= \frac{\langle Y_i(\xi), \nu\rangle}{\dist(\xi, \partial \Pip)} = \frac{\langle Y_i(\xi), \nu\rangle}{\langle \xi, \nu\rangle -d}.
	\end{align}
	A simple calculation gives us that
	\begin{align}
	 	X_i(V_i)(\xi) = 
	 	-\frac{\langle X_i(\xi), \nu\rangle^2}{\dist(\xi, \partial\Pip)^2} 
	 \quad \textrm{and}\quad
	 	Y_i(V_{n\limplus i})(\xi)=-\frac{\langle Y_i(\xi), \nu\rangle^2}{\dist(\xi, \partial\Pip)^2}.
	\end{align}
	Inserting into equation \eqref{eq:VectorPotentialV} we find that
	\begin{align}
	 	\int_\Pip |\gradH u|^2 \geq  -\alpha(1+\alpha) \int_\Pip  \sum_{i=1}^n\frac{\langle X_i(\xi), \nu\rangle^2+\langle Y_i(\xi), \nu\rangle^2}{\dist(\xi, \partial\Pip)^2} |u|^2\, d\xi.
	\end{align}
	Choosing $\alpha$ to maximize $-\alpha(1+\alpha)$ completes the proof. 
\end{proof}

As mentioned above the theorem admits a second proof through a translation argument, and we proceed by sketching this alternative proof. The main reason for including this is that it reduces the proof of sharpness of~\eqref{eq:IneqHalfspace} to considering a given half-space, but it also gives a second geometric interpretation of the weight appearing in the theorem.

We sketch the proof only in the case of $\Heis^1$. The ideas translate without change to higher dimension but the geometry of the argument is more transparent in the case $n=1$. For simplicity we will also only deal with the case $d=0$, i.e.\ a plane passing through the origin.

For $\nu=(\nu_x, \nu_y, \nu_t)\in \S^2$, let $\Pi_{\nu}$ be the plane in $\Heis^1$ defined by the equation $\langle \xi, \nu\rangle=0$. If $\nu_t\neq 0$ we can find a $\xi^0\in \Pi_\nu$ such that $\xi^0$ is a characteristic point of $\Pi_\nu$. This reduces to solving the following system of equations:
\begin{equation}
 	\left\{\begin{matrix}
 		\langle X(\xi), \nu\rangle =0,\\
 		\langle Y(\xi), \nu\rangle =0,\\
 		\langle \xi, \nu\rangle =0.
	\end{matrix}\right.
\end{equation}
From the first two equations we find that
 \begin{align}
	x = \frac{\nu_y}{2\nu_t}\quad \textnormal{and} \quad
	y = -\frac{\nu_x}{2\nu_t}
\end{align}
which combined with the third equation gives the solution
\begin{equation}
	\xi^0=\frac{1}{2\nu_t}\left(\begin{matrix}\nu_y\\-\nu_x\\0\end{matrix}\right).
\end{equation}
Applying a change of variables given by left translation by $-\xi^0$ and using the left-invariance of $X$ and $Y$ reduces the left-hand side of inequality~\eqref{eq:IneqHalfspace} to the case of $\Pi_\nu=\{\xi\in\Heis^1 : t=0\}$. Thus we may apply~\eqref{eq:LuanYangHardy}. Changing back variables the right-hand side becomes, after some algebraic manipulations, the desired expression. For arbitrary non-vertical planes ($\nu_t\neq 0$) the argument goes through without any substantial change. For vertical planes ($\nu_t=0$) the result can be found through a simple limiting process.

In the case treated by Luan and Yang the term $|x|^2+|y|^2$ has a clear interpretation as the square of the Euclidean distance to the centre of $\Heis^1$, which here actually coincides with the Carnot--Carath\'eodory distance. This is precisely the distance from $\xi$ to the subspace of $\Heis^1$ consisting of points where $\nu$ is orthogonal to both $X$ and $Y$. 

The translation argument above then provides the interpretation that the weight corresponds to the Carnot--Carath\'eodory distance from the subspace where the $X_i$ and $Y_i$ span a hyperplane which is parallel to $\Pi_\nu$, multiplied by a factor corresponding to how "tilted" $\Pi_\nu$ is. If we let $\HorizSubspace{\nu}$ denote the subspace of $\Heis^1$ given by $\{\xi\in\Heis^1 : X(\xi)\perp \nu, Y(\xi)\perp \nu\}$ one finds that
\begin{equation}
	\frac{\langle X(\xi), \nu\rangle^2+\langle Y(\xi), \nu\rangle^2}{4} = \nu_t^2 \delta_{cc}^2(\xi, \HorizSubspace{\nu}).
\end{equation}
One should note that this weight behaves well with respect to both $\xi$ and $\nu$. In particular, if we let $\nu_t$ tend to zero this converges to $1/4$. As this leads to a rather surprising invariance of the Hardy inequality with respect to any choice of vertical plane we state this as a corollary.
\begin{corollary}
	Let $u\in C_0^\infty (\Pip)$ with $\nu_t=0$. Then the following inequality holds
	\begin{equation}
		\int_{\Pip} |\gradH u|^2\, d\xi \geq \frac{1}{4}\int_{\Pip} \frac{|u|^2}{\dist(\xi, \partial\Pip)^2}\, d\xi.
	\end{equation}
\end{corollary}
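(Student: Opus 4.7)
The plan is to derive this immediately from Theorem~\ref{thm:IneqHalfspace}. That theorem already gives the correct left-hand side and a lower bound with the weight $\sum_{i=1}^n \bigl(\langle X_i(\xi), \nu\rangle^2+\langle Y_i(\xi), \nu\rangle^2\bigr)/\dist(\xi, \partial\Pip)^2$, so it suffices to prove that the numerator reduces to the constant $1$ pointwise when $\nu_t=0$.

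First I would expand the horizontal inner products explicitly. Writing $\nu=(\nu_x, \nu_y, \nu_t)\in \R^n\times \R^n\times \R$ and identifying each left-invariant vector field with its coefficient vector in $\R^{2n+1}$, one has $X_i(\xi)=(e_i, 0, 2y_i)$ and $Y_i(\xi)=(0, e_i, -2x_i)$, where $e_i\in \R^n$ denotes the $i$-th standard basis vector. Consequently
\[
\langle X_i(\xi), \nu\rangle = (\nu_x)_i + 2y_i\, \nu_t, \qquad \langle Y_i(\xi), \nu\rangle = (\nu_y)_i - 2x_i\, \nu_t.
\]
Under the standing hypothesis $\nu_t=0$ these simplify to $(\nu_x)_i$ and $(\nu_y)_i$ respectively, so summing squares gives
\[
\sum_{i=1}^n \bigl(\langle X_i(\xi), \nu\rangle^2+\langle Y_i(\xi), \nu\rangle^2\bigr) = |\nu_x|^2+|\nu_y|^2 = |\nu|^2 = 1,
\]
where the last equality uses $\nu\in\S^{2n}$ together with $\nu_t=0$.

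Inserting this constant value into the inequality of Theorem~\ref{thm:IneqHalfspace} yields the stated inequality. There is no real obstacle: the corollary is essentially a specialization, and its content is that the geometric weight of Theorem~\ref{thm:IneqHalfspace} degenerates to a purely Euclidean Hardy weight precisely when $\Pip$ is a vertical half-space, in agreement with the heuristic $\nu_t^2\,\delta_{cc}^2(\xi, \HorizSubspace{\nu})\to 1/4$ as $\nu_t\to 0$ remarked above.
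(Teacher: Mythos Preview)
Your proposal is correct and matches the paper's own reasoning: the corollary is stated without a separate proof because it is an immediate specialization of \thmref{thm:IneqHalfspace}, obtained by observing that for $\nu_t=0$ the numerator $\sum_i\bigl(\langle X_i(\xi),\nu\rangle^2+\langle Y_i(\xi),\nu\rangle^2\bigr)$ collapses to $|\nu_x|^2+|\nu_y|^2=1$. Your explicit computation with $X_i(\xi)=(e_i,0,2y_i)$ and $Y_i(\xi)=(0,e_i,-2x_i)$ is exactly what is implicit in the paper's remark that the weight tends to $1/4$ as $\nu_t\to 0$.
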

With the above translation argument in hand we see that to prove the sharpness~\eqref{eq:IneqHalfspace} it suffices to consider a given pair of $\nu\in \S^{2n}$ and $d\in \R$. Thus we restrict our attention to the case $\Pip = \{ (x, y, t)\in\Heis^n : t \geq 0\}$, that is, $\nu=(0, \dots, 0, 1)$ and $d=0$. We will also make use of the identity
\begin{equation}
	\gradH u = \grad' u + 2 \Lambda \xi' \frac{\partial u}{\partial t}, 
\end{equation}
where $\xi'=(x, y)$, $\grad'$ denotes the gradient in $\R^{2n}$ acting in the $\xi'$ variables and $\Lambda$ is the skew symmetric matrix
\begin{equation}
	\left(\begin{matrix}
		0  & I_n  \\
		- I_n & 0
	\end{matrix}\right).
\end{equation}
It follows that
\begin{align}
	|\gradH u |^2 &= 
	\langle \grad'u + 2 \Lambda \xi' \frac{\partial u}{\partial t}, \grad'u + 2 \Lambda \xi' \frac{\partial u}{\partial t}\rangle \\
	&=|\grad' u |^2 + 4 \frac{\partial u}{\partial t} \langle \Lambda \xi', \grad'u \rangle + 4 |\Lambda \xi'|^2 \Bigl|\frac{\partial u}{\partial t}\Bigr|^2\\[2pt]
	&=|\grad' u |^2 + 4 \frac{\partial u}{\partial t} \langle \Lambda \xi', \grad'u \rangle + 4 |\xi'|^2 \Bigl|\frac{\partial u}{\partial t}\Bigr|^2.
\end{align}

The sharpness of~\eqref{eq:IneqHalfspace} now follows from a straightforward variational argument with the ansatz $u(x, y, t)=w(t)\phi(x, y)$. We argue as follows:
\begin{align}
	\inf_{u\in C_0^\infty(\Pip)} &\frac{\int_\Pip |\gradH u |^2\, d\xi}{\int_\Pip \frac{|x|^2+|y|^2}{t^2}|u|^2\, d\xi} 
	\leq  \inf_{\substack{\ \phi \in C_0^\infty(\R^{2n})\\ w \in C_0^\infty(\R_\limplus)}} \frac{\int_\Pip|\gradH (\phi w) |^2\, d\xi}{\int_\Pip \frac{|x|^2+|y|^2}{t^2}|\phi w|^2\, d\xi}\\[5pt]
	&\quad =
	\inf_{\substack{\ \phi \in C_0^\infty(\R^{2n})\\ w \in C_0^\infty(\R_\limplus)}} \frac{\int_{\R_\limplus}\int_{\R^{2n}} \bigl(|w\grad \phi |^2 + 4 w w' \phi \langle \Lambda \xi', \grad \phi \rangle + 4 |\xi'|^2 |\phi w'|^2 \bigr) d\xi'\, dt }{\int_{\R_\limplus}\int_{\R^{2n}} \frac{|w|^2}{t^2}|\xi'|^2|\phi|^2\, d\xi' dt}\\[5pt]
	&\quad=
	\inf_{\substack{\ \phi \in C_0^\infty(\R^{2n})\\ w \in C_0^\infty(\R_\limplus)}} \Biggl[
	\frac{\int_{\R_\limplus} |w|^2 \, dt}{\int_{\R_\limplus}\frac{|w|^2}{t^2} \, dt} \cdot
	\frac{\int_{\R^{2n}} |\grad \phi |^2 \, d\xi'}
	{ \int_{\R^{2n}}|\xi'|^2 |\phi|^2\, d\xi'}
	+ 
	4 \frac{\int_{\R_\limplus}w w'\, dt}{\int_{\R_\limplus}\frac{|w|^2}{t^2} \, dt}\cdot
	\frac{\int_{\R^{2n}} \phi \langle \Lambda \xi', \grad \phi \rangle \, d\xi'}
	{ \int_{\R^{2n}}|\xi'|^2 |\phi|^2\, d\xi'}\\[5pt]
	&\quad\hphantom{=\inf_{\substack{\ \phi \in C_0^\infty(\R^{2n})\\ w \in C_0^\infty(\R_\limplus)}}}
	+ 
	4 \frac{\int_{\R_\limplus}|w'|^2\, dt }{\int_{\R_\limplus}\frac{|w|^2}{t^2} \, dt} \cdot
	\frac{ \int_{\R^{2n}}|\xi'|^2 |\phi|^2 \, d\xi'}
	{ \int_{\R^{2n}}|\xi'|^2 |\phi|^2\, d\xi'}\Biggr]\\[5pt]
	&\quad=
	\inf_{\substack{\ \phi \in C_0^\infty(\R^{2n})\\ w \in C_0^\infty(\R_\limplus)}} \Biggl[
	\frac{\int_{\R_\limplus} |w|^2 \, dt}{\int_{\R_\limplus}\frac{|w|^2}{t^2} \, dt} \cdot
	\frac{\int_{\R^{2n}} |\grad \phi |^2 \, d\xi'}
	{ \int_{\R^{2n}}|\xi'|^2 |\phi|^2\, d\xi'} 
	+ 
	4 \frac{\int_{\R_\limplus}|w'|^2\, dt }{\int_{\R_\limplus}\frac{|w|^2}{t^2} \, dt}\Biggr]= 1, 
\end{align}
where we used that $2 \int_{\R_\limplus} w w' \, dt = \int_{\R_\limplus} (w^2)' \, dt =0$, the sharp Hardy inequality on $\R_\limplus$ and that
\begin{equation}
	\inf_{\phi \in C_0^\infty(\R^{2n})} \frac{\int_{\R^{2n}} |\grad \phi |^2 \, d\xi'}
	{ \int_{\R^{2n}}|\xi'|^2 |\phi|^2\, d\xi'} =0, 
\end{equation}
which follows by a simple dimensionality argument. Hence the constant appearing in \thmref{thm:IneqHalfspace} is sharp. For vertical planes, i.e.\ the ones that can not be reached by translation, the sharpness follows by a limiting procedure but can also be found by an almost identical variational argument but considering a slightly different quotient.


\subsection{An \texorpdfstring{$L^p$}{Lp} Hardy inequality on a half-space of \texorpdfstring{$\Heis^n$}{Hn}}

We may use standard techniques to generalize the proof of the previous theorem to construct $L^p$ Hardy inequalities for any $p\geq 2$. We summarize the results in the following theorem.

\begin{theorem}\label{thm:IneqHalfspaceLP}
	For $\nu\in \S^{2n}$ and $d\in \R$ let $\Pip$ be the half-space described previously. Then for $p\geq 2$ and $u\in C_0^\infty(\Pip)$ the following inequality holds
	\begin{equation}\label{eq:IneqHalfspaceLP}
		\int_\Pip |\gradH u|^p\, d\xi \geq \Bigl(\frac{p-1}{p}\Bigr)^p \int_\Pip \sum_{i=1}^n \frac{|\langle X_i(\xi), \nu\rangle|^p+|\langle Y_i(\xi), \nu\rangle|^p}{\dist(\xi, \partial\Pip)^p}|u|^p\, d\xi.
	\end{equation}
	Moreover, the constant in the inequality is sharp.
\end{theorem}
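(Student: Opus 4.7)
The plan is to adapt the factorization argument of Theorem~\ref{thm:IneqHalfspace} to the $L^p$ setting by replacing the completion of squares with integration by parts against a suitable vector field followed by a pointwise Young inequality. I would introduce the vector field $W\in (C^\infty(\Pip))^{2n}$ with components
\begin{align}
W_i(\xi) &= \frac{|\langle X_i(\xi),\nu\rangle|^{p-2}\langle X_i(\xi),\nu\rangle}{\dist(\xi,\partial\Pip)^{p-1}},\\
W_{n\limplus i}(\xi) &= \frac{|\langle Y_i(\xi),\nu\rangle|^{p-2}\langle Y_i(\xi),\nu\rangle}{\dist(\xi,\partial\Pip)^{p-1}},
\end{align}
which reduce to the fields $V_j$ used in Theorem~\ref{thm:IneqHalfspace} when $p=2$. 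Since $u$ has compact support, integration by parts of $\div_{\Heis^n}(W|u|^p)$ yields
\begin{equation}
\int_{\Pip}(-\div_{\Heis^n}W)|u|^p\, d\xi = p\int_{\Pip}|u|^{p-2}u\langle W,\gradH u\rangle\, d\xi,
\end{equation}
and repeating the calculation from the proof of Theorem~\ref{thm:IneqHalfspace} (using $X_i\langle X_i(\xi),\nu\rangle=0$ and $X_i\dist(\xi,\partial\Pip)=\langle X_i,\nu\rangle$) one obtains
\begin{equation}
-\div_{\Heis^n}W = (p-1)\sum_{i=1}^n\frac{|\langle X_i,\nu\rangle|^p+|\langle Y_i,\nu\rangle|^p}{\dist(\xi,\partial\Pip)^p}.
\end{equation}

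The right-hand side of the above identity I would bound componentwise by Young's inequality with parameter $\lambda>0$: for each $j$, writing $Z_j u$ for $X_i u$ or $Y_i u$ accordingly,
\begin{equation}
p|u|^{p-1}|W_j||Z_j u| \leq \lambda^p|Z_j u|^p + (p-1)\lambda^{-p/(p-1)}|u|^p|W_j|^{p/(p-1)}.
\end{equation}
Crucially $|W_j|^{p/(p-1)}$ equals precisely the $j$-th summand of the formula for $-\div_{\Heis^n}W$ (up to the factor $p-1$), so after summing over $j$, integrating, and substituting the divergence identity one reaches
\begin{equation}
(p-1)\bigl(1-\lambda^{-p/(p-1)}\bigr) A \leq \lambda^p\int_{\Pip}\sum_{i=1}^n\bigl(|X_i u|^p+|Y_i u|^p\bigr)\, d\xi,
\end{equation}
where $A$ denotes the target Hardy-weight integral. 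Optimizing over $\lambda>1$ at $\lambda^{-p/(p-1)}=(p-1)/p$ gives the constant $((p-1)/p)^p$ on $A$. Finally, the elementary inequality $\sum_j|v_j|^p\leq(\sum_j v_j^2)^{p/2}$ valid for $v\in\R^{2n}$ and $p\geq 2$ (the embedding $\ell^p\hookrightarrow\ell^2$) bounds the componentwise integral on the right by $\int_{\Pip}|\gradH u|^p\, d\xi$, completing the proof of~\eqref{eq:IneqHalfspaceLP}. This last step is the only place where the hypothesis $p\geq 2$ is used.

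For the sharpness claim I would reduce to the reference half-space $\{t>0\}$ via the $p$-independent translation argument from the proof of Theorem~\ref{thm:IneqHalfspace} and then test against the ansatz $u(x,y,t)=\phi_R(\xi')w(t)$ with $\phi_R(\xi')=\phi(\xi'/R)$ and $\xi'=(x,y)$. As $R\to\infty$, $\gradH\phi_R=O(R^{-1})$ is dwarfed by the $\partial_t$ contribution $2\Lambda\xi'\phi_R w'$, so
\begin{equation}
|\gradH u|^p = \bigl(4(|x|^2+|y|^2)\bigr)^{p/2}\phi_R^p|w'|^p + (\text{relative } O(R^{-2})).
\end{equation}
Rescaling $\eta=\xi'/R$ makes the $\xi'$-integrals $R$-invariant, concentrating $\phi$ near a coordinate-axis point such as $(1,0,\dots,0)$ forces the ratio $\int|\eta|^p\phi^p/\int\sum_j|\eta_j|^p\phi^p$ towards $1$, and choosing $w$ to approach the extremizer of the sharp one-dimensional Hardy inequality on $\R_\limplus$ delivers the factor $((p-1)/p)^p$.

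The main obstacle is this sharpness step: for $p=2$ the mixed term $4(\partial_t u)\langle\Lambda\xi',\grad'u\rangle$ in $|\gradH u|^2$ vanishes exactly via $\int ww'\, dt=0$, whereas for $p>2$ the $p$-th power cannot be expanded so cleanly, and one has to rely on the above concentration to render the cross contributions to $|\gradH u|^p$ asymptotically negligible. The remainder of the argument is routine bookkeeping.
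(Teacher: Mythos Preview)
Your proof of the inequality is essentially the paper's argument, repackaged: the vector field $W$ is the same (the paper writes it as a pair $(g,V)$ with $V=X_i$ or $Y_i$), the divergence computation is identical, your Young inequality with parameter $\lambda$ is equivalent to the paper's H\"older--Young step with parameter $\alpha$, and both finish with the same $\ell^2\hookrightarrow\ell^p$ inequality on the components of $\gradH u$ (you wrote the embedding backwards, but the inequality you state is the right one).

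For sharpness the two arguments genuinely diverge. The paper works on the \emph{vertical} half-space $\{x_1>0\}$, uses a weighted Jensen inequality to dominate $(|\grad'u|^2+4\partial_t u\langle\Lambda\xi',\grad'u\rangle+4|\xi'|^2|\partial_t u|^2)^{p/2}$ by a sum of three $p$-th powers with adjustable weights $c_i$, rescales in $t$ to kill the last two terms, invokes the sharp Euclidean $L^p$ Hardy inequality on $\R^{2n}_\limplus$ for the remaining term, and finally sends $c_1\to 1$. You instead work on $\{t>0\}$, rescale in the horizontal variables $\xi'\mapsto R\eta$ so that the $\grad'\phi$ contribution to $|\gradH u|^2$ becomes $O(R^{-2})$ relative to the $\partial_t$ contribution, and then reduce to the one-dimensional Hardy inequality in $t$ together with a concentration of $\phi$ at a coordinate-axis point to match $|\eta|^p$ with $\sum_j|\eta_j|^p$. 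Both routes are valid; the paper's avoids any asymptotic expansion of $|\cdot|^{p/2}$ at the cost of the auxiliary Jensen weights, while yours is more direct but requires care with the order of limits ($R\to\infty$ first at fixed $\phi,w$, then concentrate $\phi$, then send $w$ to the Hardy extremizer). One remark: the translation argument you invoke is not needed for the sharpness claim as stated in the theorem---it suffices to exhibit a single half-space where the constant is attained, which is exactly what your construction on $\{t>0\}$ does.
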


The above $L^p$ version of our Hardy inequality on a half-space is perhaps not the most natural generalization of \thmref{thm:IneqHalfspace}; a more natural weight in the right-hand side of~\eqref{eq:IneqHalfspaceLP} would be
\begin{equation}
	\frac{\bigl(\sum_{i=1}^n \langle X_i(\xi), \nu\rangle^2+\langle Y_i(\xi), \nu\rangle^2\bigr)^{p/2}}{\dist(\xi, \partial\Pip)^p}.
\end{equation}
However, by Jensen's inequality it is easy to see that \thmref{thm:IneqHalfspaceLP} implies a Hardy inequality with the above weight but with a worse constant. We believe that such an inequality should hold with the same constant as in~\eqref{eq:IneqHalfspaceLP}, namely $\bigl(\ptS\frac{p-1}{p}\ptS\bigr)^p$ (which is the sharp constant also for the Euclidean counterpart), but so far we are not able to prove this. 

\begin{proof}[Proof of \thmref{thm:IneqHalfspaceLP}]
	The proof of the theorem is very similar to the proof presented above for the $L^2$ case. By the divergence theorem we have for $g\in H^1(\Pip)$ and $V\in C^\infty(\Pip; \R^{2n+1})$ that
	\begin{align}\label{eq:DivTheoremHalfspaceLp}
	 	\int_{\Pip} \div(gV)|u|^p\, d\xi 
	 	& = 
	 	-p \int_{\Pip} g \langle V, \grad u\rangle\, \sgn(u)|u|^{p-1}\, d\xi. 
	\end{align} 
	Here and in what follows $\div$ and $\grad$ denote the usual divergence and gradient in $\R^{2n+1}$.
	By H\"older's and Young's inequalities we have that
	\begin{align}
	 	-p \int_{\Pip} g \langle V, \grad u\rangle\, \sgn(u) |u|^{p-1}\, d\xi 
	 	&\leq 
	 	p \Bigl(\int_{\Pip}|\langle V, \grad u\rangle|^p\, d\xi\Bigr)^{1/p}\Bigl(\int_{\Pip}|g|^{p/(p-1)}|u|^p\, d\xi\Bigr)^{(p-1)/p}\\
	 	&\leq 
	 	\int_{\Pip}|\langle V, \grad u\rangle|^p\, d\xi+(p-1)\int_{\Pip}|g|^{p/(p-1)}|u|^p\, d\xi.
	\end{align} 
	Inserting this into~\eqref{eq:IneqHalfspaceLP} and rearranging the terms we obtain
	\begin{align}
		\int_{\Pip}\bigl(\div (gV)-(p-1)|g|^{p/(p-1)}\bigr)|u|^p\, d\xi
		& \leq
		\int_{\Pip}|\langle V, \grad u\rangle|^p\, d\xi.
	\end{align}
	Choose $V=X_i$ and let
	\begin{equation}
		g=\alpha\, \sgn(\langle X_i(\xi), \nu\rangle)\biggl(\frac{|\langle X_i(\xi), \nu\rangle|}{\dist(\xi, \partial \Pip)}\biggr)^{p-1}.
	\end{equation}
	By the same calculations as earlier we obtain that
	\begin{align}\label{eq:XcompontentIneq}
		C(\alpha, p)\int_{\Pip}\frac{|\langle X_i(\xi), \nu \rangle|^p}{\dist(\xi, \partial \Pip)^p}|u|^p\, d\xi
		& \leq
		\int_{\Pip}|X_iu|^p\, d\xi
	\end{align}
	where 
	\begin{equation}
		C(\alpha, p)=-(p-1)(\alpha+|\alpha|^{p/(p-1)}).
	\end{equation}
	Maximizing this constant in $\alpha$ we find that
	\begin{equation}
	 	C(\alpha, p)\leq \Bigl(\frac{p-1}{p}\Bigr)^p, 
	\end{equation} 
	where the maximum is attained at
	\begin{equation}
		\alpha=-\Bigl(\frac{p-1}{p}\Bigr)^{p-1}.
	\end{equation}
	
	By an almost identical calculation but with $V=Y_i$ and
	\begin{equation}
		g=\alpha\, \sgn(\langle Y_i(\xi), \nu\rangle)\biggl(\frac{|\langle Y_i(\xi), \nu\rangle|}{\dist(\xi, \partial\Pip)}\biggr)^{p-1}
	\end{equation}
	one finds that
	\begin{equation}\label{eq:YcomponentIneq}
		\Bigl(\frac{p-1}{p}\Bigr)^p \int_{\Pip}\frac{|\langle Y_i(\xi), \nu \rangle|^p}{\dist(\xi, \partial\Pip)^p}|u|^p\, d\xi
		 \leq
		\int_{\Pip}|Y_iu|^p\, d\xi.
	\end{equation}

	Adding the two inequalities and summing over $i=1, \ldots, n$ we find that
	\begin{equation}
		\Bigl(\frac{p-1}{p}\Bigr)^p \sum_{i=1}^n\int_{\Pip}\frac{|\langle X_i(\xi), \nu\rangle|^p+|\langle Y_i(\xi), \nu \rangle|^p}{\dist(\xi, \partial\Pip)^p}|u|^p\, d\xi
		 \leq
		\int_{\Pip}\sum_{i=1}^n\bigl(|X_iu|^p+|Y_iu|^p\bigr)\, d\xi.
	\end{equation}
	Since $p\geq 2$, the function $\varphi:x \mapsto x^{p/2}$ is superadditive, and therefore
	\begin{equation}
		\sum_{i=1}^n \bigl(|X_iu|^p+|Y_iu|^p\bigr) = \sum_{i=1}^n \bigl(|X_iu|^2\bigr)^{p/2}+\bigl(|Y_iu|^2\bigr)^{p/2} \leq \Bigl(\sum_{i=1}^n |X_iu|^2+|Y_iu|^2\Bigr)^{p/2} = |\gradH u|^p.
	\end{equation}
	Inserting this into the above we get~\eqref{eq:IneqHalfspaceLP}.

	By a similar argument as in the case $p=2$ we can prove that the constant is sharp in the sense that if it were replaced by a larger constant we could choose $\Pip$ such that the inequality fails. To achieve this we wish to find an upper bound for the quantity
	\begin{align}
	  	\inf_{u\in C^{\infty}_0(\Pip)} \frac{\int_\Pip |\gradH u|^p\, d\xi}{\int_\Pip \sum_{i=1}^n \frac{|\langle X_i(\xi), \nu\rangle|^p+|\langle Y_i(\xi), \nu\rangle|^p}{\dist(\xi, \partial\Pip)^p}|u|^p\, d\xi}.
	\end{align}

	We begin by choosing $\nu_0=(1, 0, \dots, 0)$ and $d=0$. By the same calculations as in the $L^2$ case we then find that the quotient can be rewritten in the form
	\begin{align}
		\inf_{u\in C^{\infty}_0(\Pi_{\nu_0}^\limplus)} \frac{\int_{\Pi_{\nu_0}^\limplus} \bigl(|\grad'u|^2+4 \frac{\partial u}{\partial t}\langle \Lambda \xi', \grad'u\rangle+4|\xi'|^2 \bigl|\frac{\partial u}{\partial t}\bigr|^2\bigr)^{p/2}\, d\xi}
		{\int_{\Pi_{\nu_0}^\limplus} \frac{|u|^p}{x_1^p}\, d\xi}.
	\end{align}
	With the same ansatz as before, namely $u(\xi)=\phi(\xi') w(t)$, we can bound this from above by
	\begin{align}
		\inf_{\substack{\ \phi \in C^{\infty}_0(\R^{2n}_\limplus)\\ w \in C_0^{\infty}(\R)}}
		\frac{\int_{\R^{2n}_\limplus}\int_\R \bigl(|\grad\phi|^2|w|^2+4|w w'|\,|\phi\langle \Lambda \xi', \grad \phi\rangle|+4|\xi'|^2 |\phi|^2|w'|^2\bigr)^{p/2}\, dt\, d\xi'}
		{\int_{\R^{2n}_\limplus}\int_\R \frac{|\phi w|^p}{x_1^p}\, dt\, d\xi'}, 
	\end{align}
	where $\R^{2n}_\limplus= \{\xi'\in \R^{2n} : x_1>0\}$.

	In the case $p=2$ things are slightly simpler and the above quotient splits into three parts, one of which easily can be seen to be zero and another which can be eliminated by a simple scaling argument. However, in the general case we cannot in such a simple manner split the above integral. But using Jensen's inequality we can bound the quotient by some appropriately weighted sum of three terms and then use a similar scaling argument as for $p=2$.

	What we need is the following simple consequence of Jensen's inequality: For $\alpha\geq1$, $x_i\geq 0$ and any $a_i>0$, $i=1, \dots, k$, we have that
	\begin{align}
		\Bigl(\sum_{i=1}^k x_i \Bigr)^\alpha 
	&=
		\Bigl(\sum_{i=1}^k a_i\Bigr)^\alpha \biggl(\frac{\sum_{i=1}^k a_i (x_i/a_i)}{\sum_{i=1}^k a_i}\biggr)^\alpha\\
	&\leq
		\Bigl(\sum_{i=1}^k a_i\Bigr)^\alpha \biggl(\frac{\sum_{i=1}^k a_i (x_i/a_i)^\alpha}{\sum_{i=1}^k a_i}\biggr)\\
	&=
		\sum_{i=1}^k a_i^{1-\alpha}\Bigl(\sum_{j=1}^k a_j\Bigr)^{\alpha-1} x_i^\alpha.
	\end{align}
	We will apply this with $k=3$, $\alpha=p/2$, $x_1=|\grad \phi|^2|w|^2$, $x_2=4 |w w'|\,|\phi\langle \Lambda\xi', \grad \phi\rangle|$, $x_3=4 |\xi'|^2|\phi|^2|w'|^2$ and $a_i$'s to be chosen later. We also denote the effective weights of each $x_i$ by $c_i$, that is
	\begin{equation}
		c_i= a_i^{1-p/2} \Bigl( \sum_{j=1}^3 a_j\Bigr)^{p/2-1}.
	\end{equation}

	Using the above we find that 
	\begin{align}
	 \inf_{u\in C_0^\infty}& \frac{\int_{\Pi_{\nu_0}^\limplus} |\gradH u|^p\, d\xi}
		{\int_{\Pi_{\nu_0}^\limplus} \frac{|u|^p}{x_1^p}\, d\xi} 
	= \!\!
	\inf_{u\in C_0^\infty} \frac{\int_{\Pi_{\nu_0}^\limplus} \bigl(|\grad'u|^2+4 \frac{\partial u}{\partial t}\langle \Lambda \xi', \grad'u\rangle+4|\xi'|^2 \bigl|\frac{\partial u}{\partial t}\bigr|^2\bigr)^{p/2}\, d\xi}
		{\int_{\Pi_{\nu_0}^\limplus} \frac{|u|^p}{x_1^p}\, d\xi}\\[3pt]
	&\hspace{25pt}\leq \!\!
		\inf_{\substack{\phi \in C_0^\infty\\ w \in C_0^\infty}}\!
		\frac{\int_{\R^{2n}_\limplus}\int_\R \bigl(c_1|\grad\phi|^p|w|^p+2^p c_2|w w'|^{p/2}|\phi\langle \Lambda \xi', \grad \phi\rangle|^{p/2}+2^pc_3|\xi'|^p |\phi|^p|w'|^p \bigr) dt\, d\xi'}
		{\int_{\R^{2n}_\limplus}\int_\R \frac{|\phi w|^p}{x_1^p}\, dt\, d\xi'}\\[3pt]
	&\hspace{25pt}= \!\!
		\inf_{\substack{\phi \in C_0^\infty\\ w \in C_0^\infty }}\Biggl[
		c_1\frac{\int_{\R^{2n}_\limplus}|\grad\phi|^p\, d\xi'}{\int_{\R^{2n}_\limplus}\frac{|\phi|^p}{x_1^p}\, d\xi'} \cdot \frac{\int_\R |w|^p\, dt}{\int_\R |w|^p\, dt}
		+ 
		2^pc_2 \frac{\int_{\R^{2n}_\limplus}|\phi\langle \Lambda \xi', \grad \phi\rangle|^{p/2}\, d\xi'}{\int_{\R^{2n}_\limplus}\frac{|\phi|^p}{x_1^p}\, d\xi'} \cdot
		\frac{\int_\R |w w'|^{p/2}\, dt}{\int_\R |w|^p\, dt}\\[3pt]
		&\hspace{71pt}
		+2^pc_3
		\frac{\int_{\R^{2n}_\limplus}|\xi'|^p |\phi|^p\, d\xi'}{\int_{\R^{2n}_\limplus}\frac{|\phi|^p}{x_1^p}\, d\xi'}
		\cdot 
		\frac{\int_\R |w'|^p \, dt}{\int_\R|w|^p\, dt}\Biggr].
	\end{align}

	By a simple rescaling argument in $t$, by say replacing $w(t)$ by $\widetilde{w}(t)=w(\lambda t)$ with $\lambda>0$, one sees that we independently of choice of $\phi$ can make the last two terms arbitrarily small. Moreover, since the first term is independent of $w$ we can use the sharp Hardy inequality in $\R^{2n}_\limplus$ (see, for instance,~\cite{MR2777530, MR2124873}) to find that
	\begin{align}
	\inf_{u\in W^{1, p}_0}& \frac{\int_{\Pi_{\nu_0}^\limplus} |\gradH u|^p\, d\xi}
		{\int_{\Pi_{\nu_0}^\limplus} \frac{|u|^p}{x_1^p}\, d\xi} 
	\leq c_1\Bigl(\frac{p-1}{p}\Bigr)^p.
	\end{align}
	Since we have lost all dependence on $c_2$ and $c_3$ we are free to choose the $a_i$ in such a way that $c_1$ is arbitrarily close to $1$, which can be done by fixing $a_2, a_3$ positive and choosing $a_1$ sufficiently large. This completes the proof.
\end{proof}


\section{Hardy inequalities for convex domains in \texorpdfstring{$\Heis^n$}{Hn}}
\label{sec:IneqConvexSet}

We now turn our attention to using \thmref{thm:IneqHalfspace} to obtain a geometric version of Hardy's inequality on convex domains in $\Heis^n$. 

\begin{theorem}\label{thm:IneqConvexbodies2}
	Let $\Omega$ be a convex domain in $\Heis^n$. For $\xi\in \Omega$ let $\nu(\xi)$ denote the unit normal of $\partial\Omega$ at a point $\hat{\xi}\in\partial\Omega$, where $\hat\xi$ is such that $\dist(\xi, \partial\Omega)=\dist(\hat\xi, \xi)$. Then for any $u\in C^\infty_0(\Omega)$ we have that
	\begin{equation}
		\int_\Omega |\gradH u|^2\, d\xi\geq \frac{1}{4}\int_\Omega \sum_{i=1}^n \frac{\langle X_i(\xi), \nu(\xi)\rangle^2+\langle Y_i(\xi), \nu(\xi)\rangle^2}{\dist(\xi, \partial\Omega)^2}|u|^2\, d\xi.
	\end{equation}
\end{theorem}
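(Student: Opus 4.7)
The plan is to adapt the factorization argument from the proof of Theorem~\ref{thm:IneqHalfspace} to the convex setting, replacing the constant normal $\nu$ of a half-space by the $\xi$-dependent normal $\nu(\xi)$ at the nearest boundary point. Since $\Omega$ is convex, the nearest-point map $\xi\mapsto\hat\xi$, and hence $\nu(\xi)$, is well-defined for almost every $\xi\in\Omega$. Mimicking the half-space proof, I would take the vector field
\begin{equation}
V_i(\xi) = \frac{\langle X_i(\xi), \nu(\xi)\rangle}{\dist(\xi,\partial\Omega)}, \qquad V_{n\limplus i}(\xi) = \frac{\langle Y_i(\xi), \nu(\xi)\rangle}{\dist(\xi,\partial\Omega)}, \quad 1\leq i\leq n,
\end{equation}
and start from $0\leq\int_\Omega |(\gradH+\alpha V)u|^2\,d\xi$. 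Expanding and integrating by parts (permissible since $u\in C_0^\infty(\Omega)$) reproduces exactly the factorization identity~\eqref{eq:VectorPotentialV} with $\Pip$ replaced by $\Omega$.

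The first new computation is the evaluation of $\sum_i(X_iV_i+Y_iV_{n\limplus i})$ for this $V$. Writing $d(\xi)=\dist(\xi,\partial\Omega)$, the main point is that for convex $\Omega$ the Euclidean distance $d$ is concave, hence almost everywhere differentiable with $\nabla d(\xi)=-\nu(\xi)$, so $X_id=-\langle X_i(\xi),\nu(\xi)\rangle$ and similarly $Y_id=-\langle Y_i(\xi),\nu(\xi)\rangle$. Combining this with the observation that, as a differential operator acting on a scalar function, $X_i^2$ coincides with the plain Euclidean second directional derivative along the constant-coefficient vector $X_i(\xi)\in\R^{2n+1}$ (the would-be first-order correction vanishes because $X_i(y_i)=0$, and analogously $Y_i(x_i)=0$), the quotient rule yields
\begin{equation}\label{eq:plandiv}
\sum_{i=1}^n\bigl(X_iV_i+Y_iV_{n\limplus i}\bigr)
= -\frac{\Delta_{\Heis^n}d}{d} + \frac{1}{d^2}\sum_{i=1}^n\bigl(\langle X_i(\xi),\nu(\xi)\rangle^2+\langle Y_i(\xi),\nu(\xi)\rangle^2\bigr).
\end{equation}
The second term matches what appeared in the half-space proof; the new term $-\Delta_{\Heis^n}d/d$ records the $\xi$-dependence of $\nu$.

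The Avkhadiev-type input is then used to control this extra term. Concavity of $d$ on the convex set $\Omega$ makes its Euclidean Hessian negative semi-definite wherever it exists, so $X_i^2d\leq 0$ and $Y_i^2d\leq 0$, and hence $\Delta_{\Heis^n}d\leq 0$ almost everywhere. Substituting~\eqref{eq:plandiv} into the factorization inequality and choosing $\alpha=1/2$ gives the coefficient $\alpha(1-\alpha)=1/4$ in front of the desired weight, while the leftover contribution $-\alpha\,\Delta_{\Heis^n}d/d\geq 0$ can simply be dropped, producing the inequality of the theorem.

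The main obstacle I expect is regularity: $d$ is only Lipschitz and $\nu$ only of bounded variation, so strictly speaking $V$ is merely BV and~\eqref{eq:plandiv} lives a priori only in a distributional sense, with $\Delta_{\Heis^n}d$ a non-positive Radon measure. I would make this rigorous by mollifying, replacing $d$ with $d_\epsilon := d\ast \rho_\epsilon$ on a neighbourhood of $\textnormal{supp}(u)$; convolution preserves concavity, so $d_\epsilon$ is smooth and concave, and all of the above applies verbatim to the associated $V_\epsilon$. One then passes $\epsilon\to 0$ by dominated convergence, using that $d_\epsilon\to d$ uniformly and $\nabla d_\epsilon\to \nabla d$ a.e.\ on the compact set $\textnormal{supp}(u)$ together with the uniform lower bound on $d$ there. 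Alexandrov's theorem provides an alternative route through pointwise a.e.\ second derivatives of $d$.
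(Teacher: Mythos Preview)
Your argument is correct, but it takes a genuinely different route from the paper's. The paper first proves the inequality for convex \emph{polytopes}: it partitions $\Omega$ into the Voronoi-type cells $\Omega_k=\{\xi:\dist(\xi,\partial\Omega)=\dist(\xi,F_k)\}$ associated to the facets $F_k$, applies the half-space factorization of \thmref{thm:IneqHalfspace} with the \emph{constant} normal $\nu_k$ on each $\Omega_k$, and then verifies by an explicit geometric computation that the boundary terms arising on the interfaces $\Gamma_{kl}=\partial\Omega_k\cap\partial\Omega_l$ have the favourable sign (using that $\normal_{kl}$ is parallel to $\nu_k-\nu_l$). The general convex case is then obtained by exhausting $\Omega$ from inside by polytopes.

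Your approach instead works directly with the global distance function $d$ and its a.e.\ gradient, recognising that the new contribution produced by the $\xi$-dependence of $\nu$ is exactly $-\Delta_{\Heis^n}d/d$ and discarding it via the concavity of $d$ (your key observation that $X_i^2 d(\xi)=\langle X_i(\xi),D^2d(\xi)\,X_i(\xi)\rangle$ is what makes the Euclidean concavity usable here). This is more streamlined---no interface bookkeeping, no polytope approximation---but the price is the regularity step: you need mollification or Alexandrov's theorem to justify $\Delta_{\Heis^n}d\leq 0$ and the integration by parts, whereas the paper's piecewise-constant normals sidestep this issue entirely. One cosmetic point: in the paper $\nu$ is the \emph{inward} normal, so $\nabla d=+\nu$ and the optimal parameter in the factorization is $\alpha=-1/2$ rather than your $+1/2$; this is harmless since only the squares $\langle X_i,\nu\rangle^2$, $\langle Y_i,\nu\rangle^2$ enter the final inequality.
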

\FloatBarrier

Note that we do not require the domain $\Omega$ to be bounded. In particular if $\Omega$ is a half-space of $\Heis^n$ this is precisely \thmref{thm:IneqHalfspace}. The proof is based on an approach used in~\cite{Avkhadiev1} and proceeds along the same lines as the that of \thmref{thm:IneqHalfspace} with an additional element in which we approximate the domain $\Omega$ by convex polytopes.

\begin{proof}[Proof of \thmref{thm:IneqConvexbodies2}]
	We begin by proving the inequality when $\Omega$ is a convex polytope. Let $\{F_k\}_k$ be the facets of $\Omega$ with corresponding inward pointing unit normals $\{\nu_k\}_k$. Further we construct a partition of $\Omega$ into the essentially disjoint sets $\Omega_k:=\{\xi\in\Omega : \dist(\xi, \partial\Omega)=\dist(\xi, F_k)\}$. Since the partition elements $\Omega_k$ are defined through a finite number of affine inequalities they are polytopes.

	For each partition element $\Omega_k$ we can now apply the same idea as in the proof of the previous theorem. The only difference will be that not all the boundary terms from the partial integration are zero. In each $\Omega_k$ we define the potential $V$ with components
	\begin{align}
		V_i(\xi) = \frac{\langle X_i(\xi), \nu_k\rangle}{\dist(\xi, F_k)},\\
		V_{n\limplus i}(\xi) = \frac{\langle Y_i(\xi), \nu_k\rangle}{\dist(\xi, F_k)}.
	\end{align}

	Through the same calculations as before one finds that
	\begin{align}\label{eq:PIOmegaK}
		0 & \leq
		\sum_{i=1}^n \int_{\Omega_k} \bigl(|(X_i+\alpha V_i)u|^2 +|(Y_i+\alpha V_{n\limplus i})u|^2\bigr) d\xi\\
		& =
		\sum_{i=1}^n \int_{\Omega_k} \bigl(|X_iu|^2+|Y_iu|^2-\alpha |u|^2 (X_i(V_i)+Y_i(V_{n\limplus i}))+\alpha^2 |u|^2(V_i^2+V_{n\limplus i}^2)\bigr) d\xi\\
		&\quad 
		 + \alpha\sum_{i=1}^n\int_{\partial \Omega_k} |u|^2 \bigl(V_i\langle X_i(\xi), \normal_k(\xi)\rangle+V_{n\limplus i}\langle Y_i(\xi), \normal_k(\xi)\rangle\bigr) d\Gamma_{\partial\Omega_k}(\xi),
	\end{align}
	where $\normal_k(\xi)$ denotes the outward pointing unit normal of $\partial\Omega_k$ at $\xi$. Note that on $F_k\subset \partial\Omega_k$ we have that $\normal_k(\xi)=-\nu_k$.

	Since $u$ is compactly supported in $\Omega$ the boundary contribution from $\partial \Omega$ is again zero, and thus all we need to deal with are the parts of $\partial \Omega_k$ that are in the interior of $\Omega$. For each such facet of $\Omega_k$ there is some $\Omega_l$, $l\neq k$, that shares this facet. Let $\Gamma_{kl}$ denote the common facet of $\Omega_k$ and $\Omega_l$, and note that $\normal_k{\mid_{\Gamma_{kl}}}=-\normal_l{\mid_{\Gamma_{kl}}}$. Summing over all partition elements $\Omega_k$ and letting $\normal_{kl}=\normal_{k}{\mid_{\Gamma_{kl}}}$, i.e.\ the unit normal of $\Gamma_{kl}$ pointing from $\Omega_k$ into $\Omega_l$, we obtain using the earlier calculations for the components of $V$ that

	\begin{align}\label{eq:SimplifyingBddry}
		0&\leq  
		\int_{\Omega} |\gradH u|^2\, d\xi - 
		\frac{1}{4}\int_\Omega \sum_{i=1}^n\frac{\langle X_i(\xi), \nu(\xi)\rangle^2+\langle Y_i(\xi), \nu(\xi)\rangle^2}{\dist(\xi, \partial\Omega)^2}|u|^2\, d\xi \\
		&\quad 
		-\frac{1}{2}\sum_{k\neq  l}\sum_{i=1}^n\int_{\Gamma_{kl}}  \frac{\langle X_i(\xi), \nu_k\rangle\langle X_i(\xi), \normal_{kl}\rangle+\langle Y_i(\xi), \nu_k\rangle\langle Y_i(\xi), \normal_{kl}\rangle}{\dist(\xi, F_k)}|u|^2\, d\Gamma_{kl}\\
		&=
		\int_{\Omega} |\gradH u|^2\, d\xi - 
		\frac{1}{4}\int_\Omega \sum_{i=1}^n\frac{\langle X_i(\xi), \nu(\xi)\rangle^2+\langle Y_i(\xi), \nu(\xi)\rangle^2}{\dist(\xi, \partial\Omega)^2}|u|^2\, d\xi \\
		&\quad 
		-\frac{1}{2}\sum_{k<l}\sum_{i=1}^n\int_{\Gamma_{kl}}  \frac{\langle X_i(\xi), \nu_k-\nu_l\rangle\langle X_i(\xi), \normal_{kl}\rangle+\langle Y_i(\xi), \nu_k-\nu_l\rangle\langle Y_i(\xi), \normal_{kl}\rangle}{\dist(\xi, F_k)}|u|^2\, d\Gamma_{kl}.
	\end{align}
	In the last equality we use the fact that $\Gamma_{kl}$ is by definition the set where $\dist(\xi, F_k)=\dist(\xi, F_l)$. 

	By construction we have that
	\begin{align}
			\Gamma_{kl}=\{\xi : \xi\cdot\nu_k-d_k=\xi\cdot\nu_l-d_l\}.
	\end{align}
	Rearranging we find that $\xi\cdot(\nu_k-\nu_l)-d_k+d_l=0$, that is, $\Gamma_{kl}$ is a hyperplane with normal $\nu_k-\nu_l$. Therefore, we have $\nu_k-\nu_l \parallel \normal_{kl}$ and all that remains to do is check that $(\nu_k-\nu_l)\cdot \normal_{kl}>0$. Since $\nu_k$ points into $k$-th partition element and $\normal_{kl}$ points out, $\nu_k\cdot \normal_{kl}$ is non-negative. By the same argument the term $\nu_l\cdot \normal_{kl}$ is non-positive. Therefore, the entire expression is non-negative and moreover we have that 
		\begin{align}
			|\nu_k-\nu_l|^2=(\nu_k-\nu_l)\cdot (\nu_k-\nu_l) &=
			2-2 \nu_k\cdot \nu_l \\
			&= 2-2\cos(\alpha_{kl}), 
		\end{align}
	where $\alpha_{kl}$ is the angle between $\nu_k$ and $\nu_l$. Thus $(\nu_k-\nu_l)\cdot \normal_{kl}=\sqrt{2-2\cos\alpha_{kl}}$ and equation~\eqref{eq:SimplifyingBddry} gives us that
	\begin{align}
		0 &\leq 
		\int_{\Omega} |\gradH u|^2\, d\xi - 
		\frac{1}{4}\int_\Omega \sum_{i=1}^n\frac{\langle X_i(\xi), \nu(\xi)\rangle^2+\langle Y_i(\xi), \nu(\xi)\rangle^2}{\dist(\xi, \partial\Omega)^2}|u|^2\, d\xi \\
		&\quad 
		-\frac{1}{\sqrt{2}}\sum_{k<l}\sum_{i=1}^n\int_{\Gamma_{kl}}\!\!\sqrt{1-\cos\alpha_{kl}}\, \frac{\langle X_i(\xi), \normal_{kl}\rangle^2+\langle Y_i(\xi), \normal_{kl}\rangle^2}{\dist(\xi, F_k)}|u|^2\, d\Gamma_{kl}.
	\end{align}
	We conclude that the boundary terms are of the correct sign, and the inequality follows. 

	Let now $\Omega$ be an arbitrary convex domain. For $u\in C_0^\infty(\Omega)$ we can always choose an increasing sequence of convex polytopes $\{\Omega_j\}_{j=1}^\infty$ such that $u\in C_0^\infty(\Omega_1)$, $\Omega_j\subset \Omega$ and $\Omega_j\to\Omega$ when $j\to\infty$. Letting $\nu_j(\xi)$ be the map $\nu$ from above corresponding to $\Omega_j$ we have that
	\begin{align}
		\int_\Omega |\gradH u|^2 \, d\xi 
		&= 
		\int_{\Omega_j} |\gradH u|^2 \, d\xi\\
		&\geq 
		\frac{1}{4} \int_{\Omega_j} \sum_{i=1}^n \frac{\langle X_i(\xi), \nu_j(\xi)\rangle^2+\langle Y_i(\xi), \nu_j(\xi)\rangle^2}{\dist(\xi, \partial\Omega_j)^2}|u|^2\, d\xi\\
		&= 
		\frac{1}{4} \int_{\Omega} \sum_{i=1}^n \frac{\langle X_i(\xi), \nu_j(\xi)\rangle^2+\langle Y_i(\xi), \nu_j(\xi)\rangle^2}{\dist(\xi, \partial\Omega_j)^2}|u|^2\, d\xi\\
		&\geq 
		\frac{1}{4} \int_{\Omega} \sum_{i=1}^n \frac{\langle X_i(\xi), \nu_j(\xi)\rangle^2+\langle Y_i(\xi), \nu_j(\xi)\rangle^2}{\dist(\xi, \partial\Omega)^2}|u|^2\, d\xi.
	\end{align}
	Letting $j$ tend to infinity completes the proof.
\end{proof}


\subsection{\texorpdfstring{$L^p$}{Lp} Hardy inequality for a convex domain}

Again we can make slight alterations to the proof above to obtain $L^p$-inequalities, $p\geq 2$, on convex domains. We summarize the results in the following theorem.

\begin{theorem}\label{thm:IneqConvexbodiesP}
	Let $\Omega$ be a convex domain in $\Heis^n$ and for $\xi\in \Omega$ let $\nu(\xi)$ denote the unit normal of $\partial\Omega$ at a point $\hat{\xi}\in\partial\Omega$, where $\hat\xi$ is such that $\dist(\xi, \partial\Omega)=\dist(\hat\xi, \xi)$. Then for any $p\geq 2$ and $u\in C_0^\infty(\Omega)$ the following inequality holds
	\begin{equation}\label{eq:IneqConvexBodiesP}
		\int_\Omega |\gradH u|^p\, d\xi \geq \Bigl(\frac{p-1}{p}\Bigr)^p \int_\Omega \sum_{i=1}^n \frac{|\langle X_i(\xi), \nu(\xi)\rangle|^p+|\langle Y_i(\xi), \nu(\xi)\rangle|^p}{\dist(\xi, \partial\Omega)^p}|u|^p\, d\xi.
	\end{equation}
\end{theorem}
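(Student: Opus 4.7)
The plan is to fuse the two ideas used in the proofs of \thmref{thm:IneqHalfspaceLP} and \thmref{thm:IneqConvexbodies2}: the divergence theorem / H\"older--Young chain that yields the $L^p$ half-space bound, and the convex polytope partition together with sign analysis of the interface contributions. By the monotone polytope approximation used at the end of the proof of \thmref{thm:IneqConvexbodies2}, it suffices to prove \eqref{eq:IneqConvexBodiesP} when $\Omega$ is a convex polytope. Let $\{F_k\}_k$ be its facets with inward unit normals $\{\nu_k\}_k$ and partition $\Omega$ into the convex polytopes $\Omega_k := \{\xi\in\Omega : \dist(\xi, \partial\Omega)=\dist(\xi, F_k)\}$.

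On each $\Omega_k$ I apply the divergence theorem to $gV|u|^p$ with $V = X_i$ and
\[
 g(\xi) = \alpha\, \sgn(\langle X_i(\xi), \nu_k\rangle)\biggl(\frac{|\langle X_i(\xi), \nu_k\rangle|}{\dist(\xi, F_k)}\biggr)^{p-1},
\]
this time retaining the boundary term on $\partial\Omega_k$. Repeating verbatim the H\"older--Young estimate from the proof of \thmref{thm:IneqHalfspaceLP} and optimizing at $\alpha=-((p-1)/p)^{p-1}$ yields
\[
 \Bigl(\tfrac{p-1}{p}\Bigr)^p \int_{\Omega_k} \frac{|\langle X_i, \nu_k\rangle|^p}{\dist(\xi, F_k)^p}|u|^p\, d\xi \leq \int_{\Omega_k} |X_i u|^p\, d\xi + \int_{\partial\Omega_k} g\,\langle X_i, \normal_k\rangle |u|^p\, d\Gamma,
\]
where $\normal_k$ is the outward unit normal of $\partial\Omega_k$. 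Summing over $k$, contributions on $\partial\Omega$ vanish by compact support of $u$, and each internal facet $\Gamma_{kl}$ (with $k<l$) is shared by $\Omega_k$ and $\Omega_l$. Using $\normal_k{\mid_{\Gamma_{kl}}}=-\normal_l{\mid_{\Gamma_{kl}}}=:\normal_{kl}$ and the fact that $\dist(\xi, F_k)=\dist(\xi, F_l)$ on $\Gamma_{kl}$, the total contribution from $\Gamma_{kl}$ reduces to
\[
 \int_{\Gamma_{kl}} \frac{\alpha\,\bigl[\sgn(a)|a|^{p-1}-\sgn(b)|b|^{p-1}\bigr]\langle X_i, \normal_{kl}\rangle}{\dist(\xi, F_k)^{p-1}}|u|^p\, d\Gamma_{kl},
\]
with $a:=\langle X_i, \nu_k\rangle$ and $b:=\langle X_i, \nu_l\rangle$.

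The crucial sign analysis mirrors the one in the proof of \thmref{thm:IneqConvexbodies2}: by convexity of $\Omega$ the normal $\normal_{kl}$ is a positive scalar multiple of $\nu_k-\nu_l$, so $\langle X_i, \normal_{kl}\rangle$ has the same sign as $a-b$; since $x\mapsto \sgn(x)|x|^{p-1}$ is non-decreasing, the bracket times $(a-b)$ is non-negative; and $\alpha<0$. Hence every interface contribution is non-positive and may be dropped, which leaves
\[
 \Bigl(\tfrac{p-1}{p}\Bigr)^p \int_{\Omega} \frac{|\langle X_i, \nu\rangle|^p}{\dist(\xi, \partial\Omega)^p}|u|^p\, d\xi \leq \int_{\Omega}|X_i u|^p\, d\xi,
\]
and analogously for $Y_i$ with the choice $V=Y_i$. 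Summing over $i=1, \ldots, n$ and applying the superadditivity of $x\mapsto x^{p/2}$ as at the end of the proof of \thmref{thm:IneqHalfspaceLP} converts the right-hand side to $\int_\Omega |\gradH u|^p\, d\xi$, which establishes \eqref{eq:IneqConvexBodiesP} for convex polytopes. The monotone polytope approximation from the proof of \thmref{thm:IneqConvexbodies2} then extends the inequality to arbitrary convex $\Omega$.

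The only step that requires new input over what is already in the earlier proofs is the sign analysis of the $\Gamma_{kl}$ contribution, which takes the place of the manifestly non-negative squared boundary terms that came for free from the factorization in the $L^2$ argument; here the required sign is produced by the monotonicity of $x\mapsto \sgn(x)|x|^{p-1}$, equivalently by the convexity of $x\mapsto |x|^p$.
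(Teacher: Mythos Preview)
Your proposal is correct and follows essentially the same approach as the paper: the same polytope partition, the same choice of $g$ and $V$, the same H\"older--Young optimization, and the same reduction of the interface terms via $\normal_{kl}\parallel \nu_k-\nu_l$. The only cosmetic difference is that you dispose of the $\Gamma_{kl}$ sign by invoking monotonicity of $x\mapsto\sgn(x)|x|^{p-1}$, whereas the paper expands $\langle X_i,\normal_{kl}\rangle=\beta(a-b)$ and factors the resulting expression as $(A^{p-1}-B^{p-1})(A-B)$; these are equivalent observations.
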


\begin{proof}[Proof of \thmref{thm:IneqConvexbodiesP}]
	The proof of the theorem is very similar to the proof presented above for the $L^2$ case. We again begin with the case when $\Omega$ is a polytope and consider the same partition $\Omega_k$. For $g\in H^1(\Omega_k)$ and $V\in C^\infty(\Omega_k; \R^{2n+1})$ the divergence theorem gives us that
	\begin{align}\label{eq:DivTheoremLp}
	 	\int_{\Omega_k} \div(gV)|u|^p\, d\xi 
	 	& = 
	 	-p \int_{\Omega_k} g \langle V, \grad u\rangle \sgn(u) |u|^{p-1}\, d\xi+\int_{\partial \Omega_k} g  \langle V, \normal_k(\xi)\rangle |u|^p \, d\Gamma_{\partial\Omega_k}(\xi).
	\end{align} 
	Consider the first term in the right-hand side. Using H\"older's and Young's inequalities we have that
	\begin{align}
	 	-p \int_{\Omega_k} g \langle V, \grad u\rangle \sgn(u)|u|^{p-1}\, d\xi 
	 	&\leq 
	 	p \Bigl(\int_{\Omega_k}|\langle V, \grad u\rangle|^p\, d\xi\Bigr)^{1/p}\Bigl(\int_{\Omega_k}|g|^{p/(p-1)}|u|^p\, d\xi\Bigr)^{(p-1)/p}\\
	 	&\leq 
	 	\int_{\Omega_k}|\langle V, \grad u\rangle|^p\, d\xi+(p-1)\int_{\Omega_k}|g|^{p/(p-1)}|u|^p\, d\xi.
	\end{align} 
	Inserting this into \ref{eq:DivTheoremLp} and rearranging the terms we obtain
	\begin{align}
		\int_{\Omega_k}(\div(gV)-(p-1)|g|^{p/(p-1)})|u|^p\, d\xi
		& \leq
		\int_{\Omega_k}|\langle V, \grad u\rangle|^p\, d\xi\\
		&\quad  +
		\int_{\partial \Omega_k} g \langle V, \normal_k(\xi)\rangle|u|^p  \, d\Gamma_{\partial\Omega_k}(\xi).
	\end{align}
	Choosing $V=X_i$ and letting 
	\begin{equation}
		g=\alpha\, \sgn(\langle X_i(\xi), \nu_k\rangle) \biggl(\frac{|\langle X_i(\xi), \nu_k\rangle|}{\dist(\xi, F_k)}\biggr)^{p-1}
	\end{equation}
	we see by the same calculations as earlier that
	\begin{align}
		C(\alpha, p)\int_{\Omega_k}\frac{\langle X_i(\xi), \nu_k\rangle^p}{\dist(\xi, F_k)^p}&|u|^p\, d\xi
		\leq  
		\int_{\Omega_k}|X_iu|^p\, d\xi \\
	  &+
		\alpha\int_{\partial \Omega_k}\!\! \sgn(\langle X_i(\xi), \nu_k\rangle) \biggl(\frac{|\langle X_i(\xi), \nu_k\rangle|}{\dist(\xi, F_k)}\biggr)^{p-1}\!\!\langle X_i(\xi), \normal_k(\xi)\rangle |u|^p\, d\Gamma_{\partial\Omega_k}(\xi)
	\end{align}
	where 
	\begin{equation}
		C(\alpha, p)=-(p-1)(\alpha-|\alpha|^{p/(p-1)}).
	\end{equation}
	Maximizing this constant in $\alpha$ we find as before that
	\begin{equation}
	 	C(\alpha, p)\leq \Bigl(\frac{p-1}{p}\Bigr)^p, 
	\end{equation} 
	where the maximum is attained at
	\begin{equation}
		\alpha=-\Bigl(\frac{p-1}{p}\Bigr)^{p-1}.
	\end{equation}
	
	What remains to complete the proof is to show that we can discard the boundary terms after summing over the $\Omega_k$, in other words we need to prove that
	\begin{equation}
	 	-\Bigl(\frac{p-1}{p}\Bigr)^{p-1}\sum_{k}\int_{\partial \Omega_k}\!\! \sgn(\langle X_i(\xi), \nu_k\rangle) \biggl(\frac{|\langle X_i(\xi), \nu_k\rangle|}{\dist(\xi, F_k)}\biggr)^{p-1}\!\!\langle X_i(\xi), \normal_k(\xi)\rangle |u|^p\, d\Gamma_{\partial\Omega_k}(\xi) \leq 0.
	\end{equation}
	As in the $L^2$ case only the boundary terms that come from the interior of $\Omega$ are non-zero, and again these appear in pairs. Thus, in the same manner as before we wish to show that
	\begin{align}\label{eq:LpBoundaryTerms}
		\int_{\Gamma_{kl}} \biggl[\sgn(&\langle X_i(\xi), \nu_k\rangle) \biggl(\frac{|\langle X_i(\xi), \nu_k\rangle|}{\dist(\xi, F_k)}\biggr)^{p-1}\!\!\langle X_i(\xi), \normal_{kl}\rangle\\ 
		&- \sgn(\langle X_i(\xi), \nu_l\rangle) \biggl(\frac{|\langle X_i(\xi), \nu_l\rangle|}{\dist(\xi, F_l)}\biggr)^{p-1}\!\!\langle X_i(\xi), \normal_{kl}\rangle \biggr]|u|^p\, d\Gamma_{kl}
	\end{align}
	is non-negative.

	By the same geometric considerations as above $n_{kl}=\beta (\nu_k-\nu_l)$, where $\beta=1/{\sqrt{2-2\cos(\alpha_{kl})}}$. Using this combined with the fact that $\dist(\xi, F_k)=\dist(\xi, F_l)$ on $\Gamma_{kl}$ we find that~\eqref{eq:LpBoundaryTerms} can be rewritten as 
	\begin{align}
		\beta \int_{\Gamma_{kl}} \Bigl[&|\langle X_i(\xi), \nu_k\rangle|^{p}-\sgn(\langle X_i(\xi), \nu_k\rangle) |\langle X_i(\xi), \nu_k\rangle|^{p-1}\langle X_i(\xi), \nu_{l}\rangle\\ 
		&+ |\langle X_i(\xi), \nu_l\rangle|^{p}-\sgn(\langle X_i(\xi), \nu_l\rangle) |\langle X_i(\xi), \nu_l\rangle|^{p-1}\langle X_i(\xi), \nu_{k}\rangle \Bigr]\frac{|u|^p}{\dist(\xi, F_k)^{p-1}}\, d\Gamma_{kl}.
	\end{align}
	Thus it suffices to proof that the expression in the brackets is positive. Clearly this is positive when $\langle X_i(\xi), \nu_k\rangle$ and $\langle X_i(\xi), \nu_l\rangle$ have different signs. On the other hand if the two scalar products have the same sign we have an expression of the form $A^p -A^{p-1}B-B^{p-1}A+B^{p-1}$, where $A=|\langle X_i(\xi), \nu_k\rangle|$ and $B=|\langle X_i(\xi), \nu_l\rangle|$. But this we can rewrite as 
	\begin{equation}
		A^p -A^{p-1}B-B^{p-1}A+B^{p-1}=(A^{p-1}-B^{p-1})(A-B). 
	\end{equation}
	Now it is clear that both terms on the right-hand side have the same sign. Thus~\eqref{eq:LpBoundaryTerms} is non-negative and we conclude that the boundary terms can be discarded.

	By almost identical calculations we find the corresponding inequalities for the $Y_i$. Summing all terms and using Jensen's inequality as in the proof of \thmref{thm:IneqHalfspaceLP} we obtain the desired inequality for polytopes. The proof of the theorem can now be completed in the same manner as for \thmref{thm:IneqConvexbodies2}.
\end{proof}

\vspace{5pt}\noindent{\bf Acknowledgements.} It is a pleasure to thank Professor Ari Laptev for suggesting the problem studied here and for his input and encouragement. Part of this work was carried out during a visit to the Institute of Analysis, Dynamics and Modelling at Universit\"at Stuttgart and the author would like to thank the staff for their hospitality, and in particular Professor Timo Weidl for the invitation. The author would also like to thank Eric Larsson, Bartosch Ruszkowski and Aron Wennman for fruitful discussions. The author is supported by the Swedish Research Council grant no.\ 2012-3864.

\vspace{0.5cm}


\bibliographystyle{amsplain}
\bibliography{include/references} 
\addcontentsline{toc}{chapter}{\numberline{}Bibliography} 

\end{document}